\newcommand{\Uq}{\mathcal{U}_q(\mathfrak{g})}
\newtheorem{theorem}{Theorem}
\newtheorem{lemma}{Lemma}
\newtheorem{corollary}{Corollary}
\theoremstyle{remark}
\author{Jeffrey Kuan} %%ACTION: Change the name here
\title{An explicit central element of $\mathcal{U}_q(\mathfrak{so}_5)$ and its corresponding quantum Hamiltonian} %%ACTION: Change the course here
\date{}
\begin{document}

\maketitle

\begin{framed}

Accessibility Statement: An accessible version of this PDF, which meets the guidelines of WCAG Level 2.1AA, can be found on the first author's webpage at this link:\\

\url{https://www.math.tamu.edu/~jkuan/CentralElementB2_Accessible.pdf}

\end{framed}

\abstract{A previous paper of the author developed a general method for producing explicit central elements of quantized Lie algebras using Lusztig's inner product. This method had previously been applied for the type $C_2,D_3$ and $D_4$ Lie algebras. The current paper repeats the calculation for the type $B_2$ Lie algebra, which is actually isomorphic to the $C_2$ Lie algebra. The explicit expression for the corresponding quantum Hamiltonian is computed.}

\section{Introduction}
In mathematical physics, quantum Hamiltonians can be constructed from central elements of quantization of Lie algebras. In order to compute the entries of the Hamiltonian, these central elements need to be explicitly written in terms of the generators. To this end, the author proved a formula in \cite{Kua16} to produce explicit expressions for central elements, using Lusztig's inner product. The method had found explicit central elements for the Lie algebras with Dynkin diagrams $C_2$ and $D_3,D_4$ \cite{Kua16,KLLPZ20} (for the type $A$ Lie algebras, the central element had already been found in \cite{GZB91} and used in \cite{Kua17}, but their method does not appear to apply here). In the present paper, we carry about the calculation for $B_2$. Note that the $B_2$ and $C_2$ Lie algebras are isomorphic, but whereas the $C_2$ calculation uses the four--dimensional representation ($\mathfrak{sp}_4$), this paper uses the five--dimensional representation ($\mathfrak{so}_5$). Indeed, by the Harish--Chandra isomorphism, the center of the $B_2 \cong C_2$ Lie algebra is generated by two elements. These two elements can be taken to be the central elements of \cite{Kua16} and the one here. See also \cite{KuaZha18} for an analog of the explicit central elements generating the center in the case of the $A_n$ Lie algebras.

In the calculation done in this paper, we observe that not every element of the dual basis needs to be calculated, and that the automorphisms of the Lie algebra streamline some of the computations. This will slightly reduce computational work in the future.

From the central element, the co--product of the quantized Lie algebra produces a quantum Hamiltonian. If all the matrix entries of the quantum Hamiltonian are non--negative, then a ground state conjugation will define a Markov process, which can be interpreted as an interacting particle system. We will also provide the matrix entries of the quantum Hamiltonian.

If the entries of the quantum Hamiltonian are non--negative, then a ground state transformation will produce a Markov process. This ideas goes back to \cite{SchSan94} and \cite{Sch97}. More recently it was generalized in \cite{CGRS15} (see also \cite{CGRS16}). If some of the entries are negative, it is still sometimes possible to remove such states, as in \cite{Kua16} or \cite{KLLPZ20}.

\section{Background}

\subsection{Description of type $B_2 \cong C_2$ Lie algebra}
Define the symplectic Lie algebra $\mathfrak{sp}_4$ to be the Lie algebra consisting of $4\times 4$ matrices of the form (where $A,B,C,D$ are $2\times 2$ blocks)
$$
\left\{
\left(
\begin{tabular}{cc}
$A $& $B $\\
$C$ & $D$
\end{tabular}
\right)
: A=-D^T, B=B^T, C=C^T
\right\}
$$
Using the notation that $E_{ij}$ is a matrix with a $1$ in the $(i,j)$--entry and $0$ everywhere else, set 
\begin{align*}
e_1&= E_{12} - E_{43}, \quad  f_1 = E_{21} - E_{34}, \quad h_1 = E_{11} - E_{22} -  E_{33} + E_{44}  \\
e_2 &= E_{24}, \quad  f_2 = E_{42}, \quad  h_2 = E_{22} - E_{44} 
\end{align*}
Then $\mathfrak{sp}_4$ has a basis $e_1, e_2, e_1e_2, e_2e_1, e_1e_2e_1, f_1, f_2, f_1f_2, f_2f_1, f_1f_2f_1, h_1, h_2$. Each $e_i,f_i,h_i$ generate a copy of $\mathfrak{sl}_2,$ and the two copies are related by
$$
[h_1,e_2]=-2e_2, \quad [h_1,f_2]=2f_2, \quad [2h_2,e_1]=-2e_1, \quad [2h_2,f_1]=2f_1,
$$
where we have written $2h_2$ to preserve the symmetry. As usual, these relations can be summarized by describing the roots of $\mathfrak{sp}_4$. Let $\mathfrak{h}$ denote the linear span of $h_1,h_2$ and identify $\mathfrak{h} \cong \mathbb{R}^2$ via $h_1\mapsto x_1,2h_2\mapsto x_2$. Define $\alpha_1(x_1)=2,\alpha_1(x_2)=-2$ and $\alpha_2(x_1)=-2,\alpha_2(x_2)=4$ so that the above relations are
$$
[x_i,e_j]=\alpha_j(x_i)e_j \quad [x_i,f_j]=-\alpha_j(x_i)f_j
$$
If $\mathfrak{h}^*$ is identified with $\mathbb{R}^2$ then $\alpha_1=(1,-1),\alpha_2=(0,2)$, so that $\alpha_j(x_i)=(\alpha_i,\alpha_j)$ where $(\cdot,\cdot)$ is the usual Euclidean inner product.

Let $V$ be the natural four--dimensional representation of $\mathfrak{sp}_4$. Using the identification of $\mathfrak{h}^*$ with $\mathbb{R}^2$, show that $V$ has a basis $v_1,v_2,v_4,v_3$ which are in the weight spaces $V[(1,0)],V[(0,1)], V[(0,-1)], V[(-1,0)]$ respectively.

\subsection{A five--dimensional representation}
Let $V$ be the four--dimensional fundamental representation of $\mathfrak{sp}_4$. The exterior power $V \wedge V$ is a representation of dimension $\binom{4}{2}=6$. The  action of $\mathfrak{sp}_4$ preserves the subspace $\mathbb{C}v$ where $v=v_1 \wedge v_3 - v_2 \wedge v_4$, and therefore preserves the five--dimensional quotient space $W:=(V\wedge V)/\mathbb{C}v$. 
%For example, 
%\begin{align*}
%h_1 \cdot (v_1 \wedge v_3) &= v_1 \wedge h_1v_3 + h_1v_1 \wedge v_3 = 0 \\
%e_1 \cdot (v_1 \wedge v_3) &= e_1v_1 \wedge v_3 + v_1 \wedge e_1v_3 = v_1 \wedge v_4 \\
%e_1 \cdot (v_2 \wedge v_4) &= e_1v_2 \wedge v_4 + v_2 \wedge e_1v_4 = v_1 \wedge v_4
%\end{align*}

Define the basis $\{w_1,\ldots,w_5\}$ by
\begin{align*}
w_1 &= v_1 \wedge v_2 \\
w_2 &= v_1 \wedge v_4 \\
w_3 &= v_1 \wedge v_3 + v_2 \wedge v_4 \\
w_4 &= v_2 \wedge v_3\\ 
w_5 &= {\color{black} v_4 \wedge v_3}
\end{align*}
Then (again using the identification of $\mathfrak{h}^*$ with $\mathbb{R}^2$)
\begin{align*}
W[(1,1)] &= \mathbb{C} w_1  \\
W[(1,-1)] &= \mathbb{C} w_2 \\
W[(0,0)] &= \mathbb{C} w_3  \\
W[(-1,1)] &= \mathbb{C}w_4 \\ 
W[(-1,-1)] &= \mathbb{C}w_5 
\end{align*}
The representation of $\mathfrak{sp}_4$ onto $W$ actually defines an isomorphism $\mathfrak{sp}_4 \cong \mathfrak{so}_5$, which can also be seen in the Dynkin diagram.

\subsection{Quantum group}
The quantization of a finite--dimensional simple Lie algebra $\mathfrak{g}$ depends on its Dynkin diagram. Rather than define $\mathcal{U}_q(\mathfrak{g})$ most generally, here is the definition of $\mathcal{U}_q(\mathfrak{sp}_4)$. It is generated by $\{e_i,f_i,k_i\},i=1,2$ with the Weyl relations
$$
[e_i,f_j] = \delta_{ij}\frac{k_i-k_i^{-1}}{q_i-q_i^{-1}}, \quad [k_i,k_j]=0
$$
$$
k_{i}e_{j}= q^{(\alpha_i,\alpha_j)}e_{j}k_{i} \quad k_{i}f_{j}= q^{-(\alpha_i,\alpha_j)}f_{j}k_{i} , \quad 1\leq i,j\leq 2
$$
(where $q_1=q,q_2=q^2$) and the Serre relations {\color{black}
\begin{align*}
e_2^2e_1 - ( q^2 + q^{-2}  )e_2e_1e_2 + e_1e_2^2  &= 0  \\
e_1^3e_2 - (q^2 + 1 + q^{-2})e_1^2e_2e_1 + (q^2+1+q^{-2})e_1e_2e_1^2 - e_2e_1^3 &= 0  \\
f_2^2f_1 - ( q^2 + q^{-2}  )f_2f_1f_2 + f_1f_2^2  &= 0  \\
f_1^3f_2 - (q^2 + 1 + q^{-2})f_1^2f_2f_1 + (q^2+1+q^{-2})f_1f_2f_1^2 - f_2f_1^3 &= 0  \\
\end{align*}}
The co--product is
$$
\Delta(e_i) = e_i \otimes  1  + k_i \otimes  e_i \quad \Delta(f_i) = 1\otimes  f_i + f_i\otimes  k_i^{-1}, \quad \Delta(k_i) = k_i \otimes  k_i,
$$

\subsection{Central Element Construction}\label{Central Element}
Here, we explain how to construct a central element of $\mathcal{U}_q(\mathfrak{g})$.

Letting $\mathfrak{b}_{\pm}\subset \mathfrak{g}$ denote the Borel subalgebras (that is, $\mathfrak{b}_+$ is generated by $e_i,h_i$ and $\mathfrak{b}_-$ is generated by $f_i,h_i$), there is a bi--linear pairing (see Proposition 6.12 of \cite{J}) on $ \mathcal{U}_q(\mathfrak{b}_-) \times \mathcal{U}_q(\mathfrak{b}_+)$ defined on generators by 
$$
\langle k_{\alpha}, k_{\beta}\rangle = q^{-(\alpha,\beta)_{\mathfrak{g}}}, \quad \langle f_i,e_j\rangle = \frac{-\delta_{ij}}{q_i-q_i^{-1}}, \quad \langle k_i,e_j\rangle = \langle f_i,k_j\rangle = \langle 1,e_i, \rangle=\langle f_j,1\rangle=0, \quad \langle 1, 1\rangle=1
$$
and extended to all of  $ \mathcal{U}_q(\mathfrak{b}_-) \times \mathcal{U}_q(\mathfrak{b}_+)$ by 
\begin{equation}\label{Extended}
\langle y, xx' \rangle = \langle \Delta(y), x'\otimes  x\rangle, \quad \langle yy', x\rangle = \langle y \otimes  y', \Delta(x)\rangle, \quad \langle y\otimes  y' , x\otimes  x'\rangle = \langle y,x\rangle\langle y',x'\rangle.
\end{equation}
where $( \cdot,\cdot )_{\mathfrak{g}}$ is the invariant, non--degenerate invariant symmetric bilinear form on $\mathfrak{h}^*$. Furthermore, according to Lemma 6.16 of \cite{J},
\begin{equation}\label{anti}
\langle \omega(x),\omega(y) \rangle = \langle y,x \rangle = \langle \tau(y),\tau(x)\rangle 
\end{equation}
where $\omega$ is the automorphism and $\tau$ is the antiautomorphism defined by
\begin{align*}
\omega(e_i)=f_i, \quad \omega(f_i)=e_i, \quad \omega(k_i)=k_i^{-1}\\
\tau(e_i)=e_i, \quad \tau(f_i)=f_i, \quad \tau(k_i)=k_i^{-1}
\end{align*}

{\color{black}  
Choose an ordering $\geq$ on the weight space. Given $\nu \geq 0$, let $r(\nu)$ be the dimension of $U[\nu]$. Let $\{u_{\nu}^i\}_{ 1 \leq i \leq r(\nu)}$ be an arbitrary basis of $U[\nu]$, and let $\{v_{\nu}^i\}_{ 1 \leq i \leq r(\nu)}$ be the dual basis of $U[-\nu]$ under $\langle \cdot , \cdot \rangle$.
}

Let $V$ be a fundamentalrepresentation of $\mathfrak{g}$ and let $\rho$ be half the sum of the positive roots of $\mathfrak{g}$, where a root $\alpha$ is positive if $\alpha>0$.  Explicitly, $\rho=(2,1)$ for $\mathfrak{sp}_4$. {\color{black} Let $\{v_{\lambda}\}$ be a basis of $V$ where each $v_{\lambda} \in V[\lambda]$, and let $f_{\lambda}$ be a dual basis. }

The next lemma is Lemma 2.1 of \cite{Kua16}, which constructs a central element of $\mathcal{U}_q(\mathfrak{g})$. Recall that the root lattice of $\mathfrak{g}$ is the lattice in $\mathfrak{h}^*$ spanned by the roots of $\mathfrak{g}$. When $\mathfrak{g}=\mathfrak{sp}_4$, the root lattice can be explicitly written as  $\{(x_1,x_2): x_1+x_2 \in 2\mathbb{Z}\}\in \mathbb{Z}^2$.
{\color{black} \begin{lemma}\label{CentralLemma}
If $q$ is not a root of unity and $2\mu$ is in the root lattice of $\mathfrak{g}$ for all weights $\mu$ of $V$, then the element
\begin{equation}\label{central}
\sum_{\mu \geq\lambda } \sum_{i,j=1}^{r(\mu-\lambda)}  q^{(\mu-\lambda,\mu)}  q^{-(2\rho,\mu)} f_{\lambda}(v^j_{\mu-\lambda}u^i_{\mu-\lambda} v_{\lambda} ) v^j_{\mu-\lambda} k_{-\lambda-\mu} u^i_{\mu-\lambda}
\end{equation}
is central in $\Uq$, where the sum is taken over all $\mu,\lambda$ such that $V[\mu]$ and $V[\lambda]$ are nonzero. 
\end{lemma}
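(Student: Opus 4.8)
The plan is to recognize the element (\ref{central}) as a $k_{2\rho}$-twisted partial (quantum) trace over $V$ of the monodromy operator built from the quasi-$R$-matrix attached to the pairing $\langle\cdot,\cdot\rangle$, and then to deduce centrality from two standard facts: the intertwining of this quasi-$R$-matrix with the coproduct, and the cyclicity of the $k_{2\rho}$-twisted trace.

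First I would assemble the dual bases $\{u^i_\nu\},\{v^i_\nu\}$ into the quasi-$R$-matrix
\[
\Theta = \sum_{\nu \geq 0}\sum_{i=1}^{r(\nu)} v^i_\nu \otimes u^i_\nu \ \in\ \mathcal{U}_q(\mathfrak{b}_-)\,\hat\otimes\,\mathcal{U}_q(\mathfrak{b}_+),
\]
which is well defined precisely because $q$ not a root of unity forces the pairing on each $U[\nu]$ to be nondegenerate, so that the dual basis exists. The double sum over $i,j$ in (\ref{central}), together with the paired appearance of the same dual vectors in the $V$-matrix-coefficient $f_\lambda(v^j_{\mu-\lambda}u^i_{\mu-\lambda}v_\lambda)$ and in the algebra element $v^j_{\mu-\lambda}k_{-\lambda-\mu}u^i_{\mu-\lambda}$, is exactly the signature of a partial trace over $V$ of a product of two copies of $\Theta$ (the monodromy $\Theta_{21}\Theta$), with one leg pushed through the representation $\pi_V$ and the other kept in $\Uq$. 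The grouplike $k_{-\lambda-\mu}$ and the scalar $q^{(\mu-\lambda,\mu)}$ are what the Cartan part $q^{\sum h\otimes h}$ of the full $R$-matrix produces when evaluated on $v_\lambda$ (weight $\lambda$) and the $(\mu-\lambda)$-shift; here the hypothesis that $2\mu$ lies in the root lattice for every weight $\mu$ of $V$ is exactly what makes $k_{-\lambda-\mu}$ a legitimate element of $\Uq$.

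Next I would invoke the two engines. The quasi-$R$-matrix satisfies an intertwining relation $\Delta(x)\,\Theta=\Theta\,\bar\Delta(x)$ for all $x\in\Uq$ (Jantzen, Ch.\ 7), from which the monodromy commutes with $\Delta(x)$; and the grouplike $k_{2\rho}$ implements $S^2$, so the twisted trace $\sum_\lambda q^{-(2\rho,\lambda)}f_\lambda(\,\cdot\, v_\lambda)$ over $V$ is cyclic. This cyclicity is precisely what the factor $q^{-(2\rho,\mu)}$ and the matrix-coefficient structure $f_\lambda(v^j\,u^i\,v_\lambda)$ in (\ref{central}) encode, with the extension rules (\ref{Extended}) and the antipode relations (\ref{anti}) supplying the identities needed to manipulate $\Theta$.

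Finally, centrality reduces to commuting (\ref{central}) with the generators $e_i,f_i,k_i$. Commutation with $k_i$ is immediate weight bookkeeping: each summand carries total weight $-(\mu-\lambda)+(\mu-\lambda)=0$, so only the weight-zero part survives. For $e_i,f_i$ I would take the relation $[\Delta(x),\Theta_{21}\Theta]=0$, apply $\pi_V$ to the auxiliary leg, and then take the $k_{2\rho}$-twisted quantum trace over $V$: the intertwining property moves $x$ past the monodromy while trace-cyclicity reabsorbs the leftover grouplike and scalar factors, leaving $[x,(\ref{central})]=0$. The main obstacle I anticipate is bookkeeping rather than conceptual: verifying that the exact prefactors $q^{(\mu-\lambda,\mu)}q^{-(2\rho,\mu)}$ and the grouplike $k_{-\lambda-\mu}$ are precisely those produced by the Cartan part of $\Theta_{21}\Theta$ and by the $S^2$-twist, so that the partial trace is genuinely invariant — this is the step that consumes both the root-of-unity and root-lattice hypotheses. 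Since the statement is quoted as Lemma 2.1 of \cite{Kua16}, one may alternatively cite that reference for the detailed verification.
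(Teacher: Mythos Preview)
The paper does not supply its own proof of this lemma: it is stated with the attribution ``The next lemma is Lemma 2.1 of \cite{Kua16}'' and no argument is given. Your proposal correctly identifies this citation at the end, and in addition sketches the standard mechanism (quasi-$R$-matrix, monodromy, $k_{2\rho}$-twisted partial trace over $V$) that underlies the result in \cite{Kua16}; so your write-up is consistent with, and in fact more informative than, what the present paper provides.
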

}

\section{Main Result and Proof}
\begin{theorem} The element $C$ defined by

\begin{multline*}
q^{-2}\left(q-q^{-1}\right)^2 \left(  (1-q^2)f_1f_2f_1f_2 + (q^4-q^{-2})f_2f_1^2f_2 + (1-q^2)f_2f_1f_2f_1 + (1-q^2)f_1f_2^2f_1  \right)  k_{(0,0)}     \\ 
\times \left(    (1-q^2)e_1e_2e_1e_2 + (q^4-q^{-2})e_2e_1^2e_2 + (1-q^2)e_2e_1e_2e_1 + (1-q^2)e_1e_2^2e_1     \right) \\
   +{\left(q-q^{-1}\right)^4 
   f_1f_1k_{(0,0)}e_1e_1}\\
   +\left(q-q^{-1}\right)^2 \left((1+q^2)f_1f_2
   f_1-f_2f_1f_1-q^2f_1f_1f_2 \right)k_{(0,2)}\left( (1+q^2)e_1e_2e_1 -e_1e_1e_2-q^2e_2e_1
  e_1 \right)\\
   +\left(q-q^{-1}\right)^2 \left(q+q^{-1}\right) \left(f_1f_2
   q^2-f_2f_1\right)k_{(1,1)}\left(e_2e_1 q^2-e_1e_2\right)\\
   +\left(q^2-q^{-2}\right)^2 q^4
   f_2k_{(2,0)}e_2\\
   +q^{-4}\left(q-q^{-1}\right)^2 \left( (1+q^2)f_1f_2f_1 - f_1f_1f_2 - q^2f_2f_1f_1\right)k_{(0,-2)}\left((1+q^2)e_1e_2e_1 - e_2e_1e_1 -q^2e_1e_1e_2\right)\\
   +q^{-4}\left(q-q^{-1}\right)^2 \left(q+q^{-1}\right) \left(q^2 f_2f_1
   -f_1f_2\right)k_{(-1,-1)}\left(q^2 e_1e_2 -e_2e_1\right)+q^{-4}\left(q^2-q^{-2}\right)^2
   f_2k_{(-2,0)}e_2\\
   +\left(q-q^{-1}\right)^2 \left(q+q^{-1}\right) f_1k_{(-1,1)}e_1+\left(q-q^{-1}\right)^2
   \left(q+q^{-1}\right) f_1k_{(1,-1)}e_1\\
   +q^6 k_{(2,2)}+q^{-6}k_{(-2,-2)}+q^2 k_{(2,-2)}+q^{-2}k_{(-2,2)}+k_{(0,0)}
\end{multline*}
is central.
\end{theorem}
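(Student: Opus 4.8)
The plan is to apply Lemma \ref{CentralLemma} directly to the five--dimensional representation $W$ and then to evaluate the resulting expression \eqref{central} explicitly in the basis $\{w_1,\dots,w_5\}$. The first step is to check the hypothesis: every weight $\mu$ of $W$ lies in $\mathbb{Z}^2$, so $2\mu$ has even coordinates and hence $2\mu$ belongs to the root lattice $\{(x_1,x_2):x_1+x_2\in 2\mathbb{Z}\}$; thus \eqref{central} does produce a central element and only its explicit evaluation remains. A useful simplification is that the five weights $(1,1),(1,-1),(0,0),(-1,1),(-1,-1)$ are pairwise distinct, so every weight space $W[\lambda]$ is one--dimensional, the vector $v_\lambda$ is a single $w_a$, and the functional $f_\lambda$ is the coordinate dual to $w_a$.

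Next I would organize the double sum by the weight difference $\nu=\mu-\lambda$. Fixing the dominance order from the simple roots $\alpha_1=(1,-1)$ and $\alpha_2=(0,2)$, a short enumeration of the pairs $(\mu,\lambda)$ with $\mu\geq\lambda$ shows that the only $\nu\geq 0$ that occur are $0,\ \alpha_1,\ \alpha_2,\ \alpha_1+\alpha_2,\ 2\alpha_1,\ 2\alpha_1+\alpha_2$ and $2\alpha_1+2\alpha_2$; these are precisely the weights carried by the raising factors appearing in $C$. For each such $\nu$ I would write down a monomial (PBW) basis $\{u^i_\nu\}$ of the weight space $U[\nu]\subset\mathcal{U}_q(\mathfrak{b}_+)$ spanned by words in $e_1,e_2$ of weight $\nu$, reducing modulo the Serre relations, and then compute the dual basis $\{v^j_\nu\}\subset U[-\nu]$ under Lusztig's pairing \eqref{Extended}. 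Because all weight spaces of $W$ are one--dimensional, the matrix coefficient $f_\lambda(v^j_\nu u^i_\nu v_\lambda)$ is just the diagonal entry of the weight--preserving operator $v^j_\nu u^i_\nu$ at the $\lambda$--weight vector. The final step is then to record the action of $e_1,e_2,f_1,f_2$ on the $w_a$, form the relevant operator products, extract the diagonal entries, and attach the scalars $q^{(\mu-\lambda,\mu)-(2\rho,\mu)}$ with $\rho=(2,1)$ to read off the coefficients collected in the theorem.

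The main obstacle will be computing the dual bases $\{v^j_\nu\}$ in the multidimensional weight spaces, which amounts to setting up and inverting the Gram matrix of Lusztig's form on a PBW basis. This is most delicate at the top weight $\nu=2\alpha_1+2\alpha_2=(2,2)$, where $U[\nu]$ is four--dimensional and the Serre relations must be used to keep the $e$--side and $f$--side bases consistent; this is exactly the contribution that assembles into the first grouped factor of $C$. Here I would exploit the symmetries anticipated in the introduction. The anti--invariance \eqref{anti} of the pairing under the Chevalley involution $\omega$ (with $\omega(e_i)=f_i$) and the antiautomorphism $\tau$ transports the Gram matrix from the $e$--side to the $f$--side, so that a dual vector need not be recomputed from scratch on both sides. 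Moreover, the longest Weyl group element of $B_2$ acts as $-1$, so the terms of $C$ pair up under $\mu\mapsto-\mu$ together with $q\mapsto q^{-1}$ (visible in the matched pairs such as $q^6 k_{(2,2)}$ and $q^{-6}k_{(-2,-2)}$), which roughly halves the number of independent dual bases that must be found. A final bookkeeping step collects the contributions sharing a common $k_{-\lambda-\mu}$ into the displayed grouped form; matching the collected polynomials in $q$ is then a direct, if lengthy, simplification, which I would cross--check by verifying independently that $C$ commutes with $k_i$ (immediate from the weight--homogeneity of each term) and with $e_1,e_2$ (hence, by $\omega$, with $f_1,f_2$), a consistency test that, although guaranteed by Lemma \ref{CentralLemma}, confirms the assembled coefficients.
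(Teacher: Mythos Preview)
Your plan is correct and follows essentially the same route as the paper: apply Lemma~\ref{CentralLemma} to the five--dimensional module $W$, compute the dual bases $\{v^j_\nu\}$ under Lusztig's pairing (the delicate case being the four--dimensional space $U[(2,2)]$), read off the matrix coefficients from the explicit action of $e_i,f_i,k_i$ on $W$, and exploit the (anti)automorphisms $\omega,\tau$ to cut the work. The paper's only additional shortcuts are that it imports most of the lower--weight dual elements from \cite{Kua16}, builds the $(2,2)$ duals out of the auxiliary element $(e_2e_2e_1)^*\in U[(1,3)]$, and---instead of fully inverting the Gram matrix at $\nu=(2,2)$---leaves two coefficients undetermined and pins them down via the $\omega$--symmetry (cross--checked by the action on $\mathbb{C}^4\otimes\mathbb{C}^4$).
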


\begin{corollary}
The action of 
$$
\left(\frac{1}{q^{5}}-\frac{1}{q^{3}}-q^{3}+q^{5}\right)^{-1}\left(C-\left(1+\frac{1}{q^{10}}+\frac{1}{q^{6}}+q^{6}+q^{10}\right)\mathrm{Id}\right)
$$
on the tensor power of two $4$--dimensional representations is given by the following $16\times 16$ matrix: 

\includegraphics[height=3in]{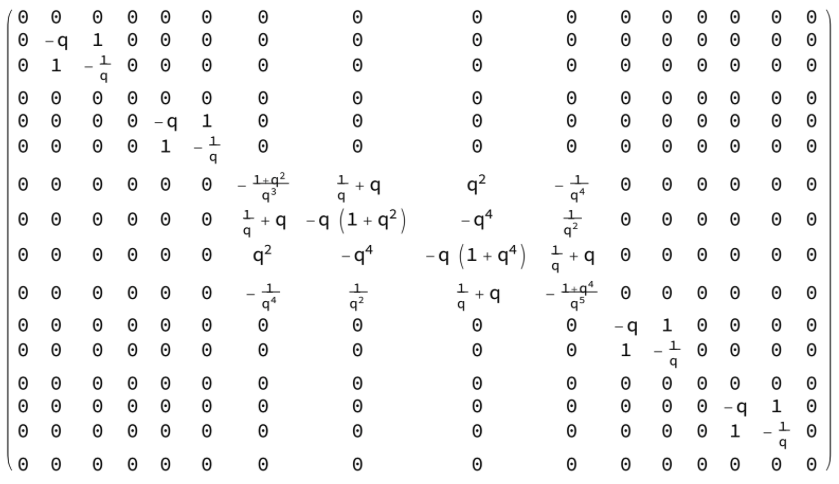}
    
\end{corollary}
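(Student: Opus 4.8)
The operator in the corollary is obtained by letting the central element $C$ act on $V \ot V$ through the iterated coproduct and then applying the indicated affine normalization. The plan is to exploit centrality: since $C$ commutes with the action of $\Uq$ on $V \ot V$, it is an intertwiner and therefore acts as a scalar on each isotypic component. First I would record the $q$-deformed action of the generators $e_i, f_i, k_i$ on the four-dimensional module $V$ (the raising and lowering maps between the weight spaces $V[(1,0)], V[(0,1)], V[(0,-1)], V[(-1,0)]$, with $k_i$ diagonal), and then the induced action on $V \ot V$ via $\Delta(e_i) = e_i \ot 1 + k_i \ot e_i$ and its companions.

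Next I would decompose $V \ot V$ into irreducibles. For $\mathfrak{sp}_4$ one has $V \ot V \cong V_{(2,0)} \oplus W \oplus \mathbb{C}$: the ten-dimensional representation of highest weight $(2,0)$ (the adjoint), the five-dimensional $W$ of highest weight $(1,1)$, and the trivial representation spanned by the invariant $v = v_1\wedge v_3 - v_2 \wedge v_4$. Because the three summands are pairwise non-isomorphic, the space of intertwiners is three-dimensional, and the action of $C$ is pinned down by its three scalar eigenvalues.

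The key simplification is that each eigenvalue can be read off a single highest weight vector. If $\xi$ is a highest weight vector of one of the summands, then $\Delta(e_i)\xi = 0$, so every term of $C$ containing a nonempty $e$-word annihilates $\xi$; since $C$ is weight-zero, its only terms with empty $e$-word are the Cartan terms $q^6 k_{(2,2)} + q^{-6}k_{(-2,-2)} + q^2 k_{(2,-2)} + q^{-2} k_{(-2,2)} + k_{(0,0)}$. Evaluating this diagonal part on the highest weights $(2,0), (1,1), (0,0)$ gives the eigenvalues $q^{10}+q^{-10}+q^6+q^{-6}+1$, $\ q^{10}+q^{-10}+q^2+q^{-2}+1$, and $\ q^6+q^{-6}+q^2+q^{-2}+1$ respectively. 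In particular the subtracted scalar $1+q^{-10}+q^{-6}+q^6+q^{10}$ is exactly the eigenvalue on $V_{(2,0)}$, so the normalized operator vanishes on the adjoint summand. Dividing the remaining two differences by $q^5 - q^3 - q^{-3} + q^{-5}$ and simplifying yields the normalized eigenvalues $-(q+q^{-1})$ on $W$ and $-(q^5+q^3+q+q^{-1}+q^{-3}+q^{-5})$ on the trivial summand.

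Finally, to exhibit the operator in the basis $\{v_i \ot v_j\}$, I would write it as $-(q+q^{-1})\,P_W - (q^5+q^3+q+q^{-1}+q^{-3}+q^{-5})\,P_{\mathbb{C}}$, where $P_W$ and $P_{\mathbb{C}}$ are the $\Uq$-equivariant projections onto the five-dimensional and trivial summands, and expand these projections in the standard basis. This last step is the main obstacle: it requires the explicit $q$-deformed Clebsch--Gordan data, equivalently the invariant vector $\Omega \in V \ot V$ spanning the trivial summand (obtained by solving $\Delta(e_i)\Omega = \Delta(f_i)\Omega = 0$) together with the embedding $W \hookrightarrow V \ot V$. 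The computation is block-diagonal by weight, so the only nontrivial blocks sit in the weight spaces of multiplicity greater than one — chiefly the four-dimensional weight-$(0,0)$ space spanned by $v_1\ot v_3, v_3\ot v_1, v_2\ot v_4, v_4\ot v_2$ — and one inverts a few small matrices rather than a $16\times 16$ one. As a consistency check, the resulting matrix must commute with $\Delta(e_i), \Delta(f_i), \Delta(k_i)$; this is guaranteed by the centrality of $C$ and determines all entries.
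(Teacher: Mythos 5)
Your proposal is correct, and it takes a genuinely different route from the paper. The paper's own proof is brute force: it has already written down the explicit $16\times 16$ matrices for $e_i,f_i,k_{(a,b)}$ acting on $\mathbb{C}^4\otimes\mathbb{C}^4$ (they were needed anyway to pin down the coefficients $A,B$ in the main theorem), and the corollary is then proved "by explicitly calculating the action of $C$" — i.e., substituting those matrices into the expression for $C$ and grinding out the normalized matrix. You instead exploit centrality and the multiplicity-free decomposition $V\otimes V\cong V_{(2,0)}\oplus W\oplus\mathbb{C}$ (valid at generic $q$, which is the one standing hypothesis you should state explicitly, together with complete reducibility for $q$ not a root of unity), reduce to the Cartan terms $q^6k_{(2,2)}+q^{-6}k_{(-2,-2)}+q^2k_{(2,-2)}+q^{-2}k_{(-2,2)}+k_{(0,0)}$ on highest weight vectors, and your arithmetic checks out against the paper's conventions: with $k_{(a,b)}$ acting on weight $\mu$ by $q^{((a,b),\mu)}$ (as one reads off the explicit $k_{(a,b)}$ matrix), the eigenvalues on highest weights $(2,0),(1,1),(0,0)$ are indeed $q^{\pm10}+q^{\pm6}+1$, $q^{\pm10}+q^{\pm2}+1$, $q^{\pm6}+q^{\pm2}+1$, and the normalized eigenvalues are $0$, $-(q+q^{-1})$, and $-(q^5+q^3+q+q^{-1}+q^{-3}+q^{-5})$, using $q^5-q^3-q^{-3}+q^{-5}=q^{-5}(q^2-1)(q^8-1)$. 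What your approach buys is conceptual content the paper omits entirely: it explains \emph{why} the affine normalization in the corollary is natural (the subtracted scalar is exactly the eigenvalue on the ten-dimensional summand, so the Hamiltonian annihilates it), and it delivers the full spectrum of the operator for free. What it costs is that the final step — exhibiting the matrix entries — still requires the $q$-Clebsch--Gordan projections $P_W$ and $P_{\mathbb{C}}$, i.e., solving for the $q$-deformed invariant vector and the embedding $W\hookrightarrow V\otimes V$ weight block by weight block; that is comparable linear algebra to the paper's direct computation, just organized into small blocks (the largest being the four-dimensional weight-$(0,0)$ space). In short: the paper's method is faster given the $16\times16$ matrices already in hand, while yours is more illuminating and provides an independent check; both legitimately prove the corollary.
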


\subsection{Proof}

We wish to apply Lemma 1. The first step is to find the dual elements. Most of them were found in Lemma 2.2 of \cite{Kua16}. There is an additional dual element that needs to be found, due to our use of the five--dimensional representation rather than the four--dimensional representation. The remaining one is stated in the following lemma.
\begin{lemma}
1. The set $\{e_2^2e_1,e_1e_2^2\}$ is a basis of $U[(1,3)]$, and the dual of $e_2^2e_1$ is 
$$
(q^{-2}-q^2)(q^2+q^{-2})^{-1}(e_2e_1)^*f_2  .
$$

2. The (ordered) set $\mathcal{B} = \{e_1e_2e_1e_2, e_2e_1^2e_2, e_1e_2^2e_1, e_2e_1e_2e_1 \}$ is a basis of $U[(2,2)]$. Its dual basis is the (ordered) set
\begin{multline*}
\mathcal{B}^* = \{ - (e_1e_1e_2)^*f_2 + f_2(e_1e_1e_2)^*, q^{-2}(e_1e_1e_2)^*f_2 - q^2 f_2 (e_1e_1e_2)^*, \\
 (q+q^{-1})^{-1}((e_2e_2e_1)^*f_1 - q^2 f_1 (e_2e_2e_1)^* ), - f_2(e_2e_1e_1)^* + (e_2e_1e_1)^*f_2 \},
\end{multline*}
where
$$
f_2(e_2e_1e_1)^* = \frac{q-q^{-1}}{q+q^{-1}}  \left((q^2-q^4)f_2f_1f_2f_1 - f_2f_1^2f_2 + q^2f_1f_2^2f_1 \right) 
$$
$$
(e_2e_1e_1)^* f_2=  \frac{q-q^{-1}}{q+q^{-1}} \left( (1-q^{-2})f_1f_2f_1f_2 + f_1f_2^2f_1   - q^2f_2f_1^2f_2\right)
$$
and applying $\tau$
$$
f_2(e_1e_1e_2)^* = \frac{q-q^{-1}}{q+q^{-1}}  \left((1-q^{-2})f_2f_1f_2f_1+ f_1f_2^2f_1 -q^2f_2f_1^2f_2 \right) 
$$
$$
(e_1e_1e_2)^* f_2 = \frac{q-q^{-1}}{q+q^{-1}}  \left((q^2-q^4)f_1f_2f_1f_2 - f_2f_1^2f_2 + q^2f_1f_2^2f_1 \right) 
$$
\end{lemma}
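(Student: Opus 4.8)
The plan is to verify the two claimed dual bases directly against the defining pairing, reducing each pairing of length-four words to a sum of products of single-generator pairings via the coproduct, and to exploit the symmetry $\tau$ to halve the bookkeeping.

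First, for each weight space I would pin down the dimension and a monomial basis using the Serre relations. The weight $(1,3)=\alpha_1+2\alpha_2$ is reached only by words in $e_1,e_2$ with one $e_1$ and two $e_2$; the three monomials $e_2^2e_1, e_2e_1e_2, e_1e_2^2$ are subject to the single Serre relation $e_2^2e_1 - (q^2+q^{-2})e_2e_1e_2 + e_1e_2^2 = 0$, leaving a two-dimensional space for which $\{e_2^2e_1, e_1e_2^2\}$ is a basis. Similarly $(2,2)=2\alpha_1+2\alpha_2$ is reached by the six words with two $e_1$'s and two $e_2$'s; the Serre relations cut this to four dimensions, and I would check that $\mathcal{B}$ spans it — equivalently, that its four elements are independent, which will follow a posteriori once the dual basis is exhibited.

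The core computation is the pairing itself. To evaluate $\langle f\text{-word}, e\text{-word}\rangle$ I would iterate $\langle yy', x\rangle = \langle y\otimes y', \Delta(x)\rangle$ so that a length-four $f$-word pairs against $\Delta^{(3)}$ of the $e$-word; the multi-tensor pairing then factorizes into single-generator pairings $\langle f_i, e_j\rangle = -\delta_{ij}/(q_i-q_i^{-1})$, each term weighted by the $q$-powers arising when the $k_i$ factors in $\Delta(e_i)=e_i\otimes 1 + k_i\otimes e_i$ are commuted past the generators to their right. The candidate duals in the statement are written as products of already-known lower-weight duals (such as $(e_2e_1)^*$ and $(e_1e_1e_2)^*$ from Lemma 2.2 of \cite{Kua16}) with a single $f_2$ or $f_1$, so for these I would instead use $\langle y f_i, x\rangle = \langle y\otimes f_i, \Delta(x)\rangle$: only the weight-$\alpha_i$ part of the right tensor leg contributes, and the left leg is handled by the known dual. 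Checking $\langle \mathcal{B}^*_a, \mathcal{B}_b\rangle = \delta_{ab}$ for all $a,b$ then simultaneously confirms that $\mathcal{B}$ is a basis and that $\mathcal{B}^*$ is its dual, and fixes the scalar $(q^{-2}-q^2)(q^2+q^{-2})^{-1}$ in part~1.

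To keep the bookkeeping under control I would invoke the symmetry relations (\ref{anti}). The antiautomorphism $\tau$ fixes $e_i, f_i$ and satisfies $\langle \tau(y), \tau(x)\rangle = \langle y, x\rangle$, so it reverses words while preserving pairings; this is exactly what sends $f_2(e_2e_1e_1)^*$ to $f_2(e_1e_1e_2)^*$ and lets me deduce half of the entries of $\mathcal{B}^*$ from the other half. The main obstacle is purely computational: expanding $\Delta^{(3)}$ of a length-four $e$-word produces many terms, and correctly tracking the $q$-power weights as the $k_i$'s commute through is where errors are most likely. I expect the Serre reduction and the $\tau$-symmetry to be the two levers that keep this tractable, cutting the number of pairings that must actually be evaluated down to a handful.
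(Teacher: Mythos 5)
Your proposal matches the paper's proof in all essentials: both establish the dimensions of $U[(1,3)]$ and $U[(2,2)]$ via the Serre relations, evaluate each pairing by splitting off a single $f_i$ with one application of the coproduct so that the known lower-weight duals $(e_2e_1)^*$, $(e_1e_1e_2)^*$, $(e_2e_2e_1)^*$ handle the left tensor leg (with most entries vanishing for weight/structural reasons), and use the antiautomorphism $\tau$ to deduce the reversed-word duals without further computation. This is the same route the paper takes, so the only remaining work is carrying out the handful of explicit pairings you identify.
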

\begin{proof}
1. By Serre's relations, $U[(1,3)]$ is two--dimensional (rather than three--dimensional).  It is immediate that
$$
\langle  (e_2e_1)^*f_2 ,  e_1 e_2^2\rangle = \langle  f_2(e_1e_2)^* ,   e_2^2e_1\rangle  = 0.
$$
because $e_2e_1$ cannot appear in the left tensor power of $\Delta(e_1e_2e_2)$. One the needs only compute that
\begin{align*}
\langle  (e_2e_1)^*f_2 ,  e_2^2e_1 \rangle  &= \langle (e_2e_1)^* \otimes  f_2, \Delta(e_2^2e_1) \rangle\\
&=  \langle (e_2e_1)^* \otimes  f_2, e_2k_2e_1 \otimes  e_2 + k_2e_2e_1 \otimes  e_2 \rangle\\
&=  (1+q^4)\langle (e_2e_1)^* \otimes  f_2, e_2k_2e_1 \otimes  e_2  \rangle\\
&=  (1+q^4)q^{-2}\langle (e_2e_1)^* \otimes  f_2, e_2e_1k_2 \otimes  e_2  \rangle\\
&= (q^2+q^{-2}) (q^{-2}-q^2)^{-1} .
\end{align*}

2. By Serre's relations, $U[(2,2)]$ is four--dimensional (rather than six--dimensional).

Finding the first two elements amounts to showing that
\begin{align*}
\langle - (e_1e_1e_2)^*f_2 + f_2(e_1e_1e_2)^*, e_1e_2e_1e_2 \rangle &= 1 \\
\langle - (e_1e_1e_2)^*f_2 + f_2(e_1e_1e_2)^*, e_2e_1^2e_2 \rangle &= 0 \\
\langle - (e_1e_1e_2)^*f_2 + f_2(e_1e_1e_2)^*, e_1e_2^2e_1 \rangle &= 0 \\
\langle - (e_1e_1e_2)^*f_2 + f_2(e_1e_1e_2)^*, e_2e_1e_2e_1 \rangle &= 0 \\
\langle q^{-2}(e_1e_1e_2)^*f_2 - q^2 f_2 (e_1e_1e_2)^* , e_1e_2e_1e_2 \rangle &= 0 \\
\langle q^{-2}(e_1e_1e_2)^*f_2 - q^2 f_2 (e_1e_1e_2)^* , e_2e_1^2e_2 \rangle &= 1 \\
\langle q^{-2}(e_1e_1e_2)^*f_2 - q^2 f_2 (e_1e_1e_2)^* , e_1e_2^2e_1 \rangle &= 0 \\
\langle q^{-2}(e_1e_1e_2)^*f_2 - q^2 f_2 (e_1e_1e_2)^* , e_2e_1e_2e_1 \rangle &= 0 
\end{align*}
Observe that the third, fourth, seventh, and eighth lines are immediate, since
$$
\langle (e_1e_1e_2)^*, e_1e_2e_1 \rangle = \langle (e_1e_1e_2)^*, e_2e_1e_1 \rangle = 0.
$$
For example,
$$
\langle  (e_1e_1e_2)^*f_2, e_2e_1e_2e_1 \rangle = \langle (e_1e_1e_2)^* \otimes  f_2, \Delta\left( e_2e_1e_2e_1 \right) \rangle 
$$
equals zero because the term $e_1e_1e_2$ cannot appear in $ \Delta\left( e_2e_1e_2e_1 \right)$.

This leaves four pairings to compute, which are
\begin{align*}
\langle (e_1e_1e_2)^*f_2,  e_1e_2e_1e_2 \rangle  &= \langle (e_1e_1e_2)^* \otimes f_2,  \Delta(e_1e_2e_1e_2) \rangle \\
&= \langle (e_1e_1e_2)^* \otimes f_2,  e_1k_2e_1e_2 \otimes  e_2 \rangle \\
&= q^2 \langle (e_1e_1e_2)^* , (e_1e_1e_2) k_2\rangle (q^{-2}-q^2)^{-1} \\
&= q^2 (q^{-2}-q^2)^{-1} 
\end{align*}
and
\begin{align*}
\langle f_2(e_1e_1e_2)^*,  e_1e_2e_1e_2 \rangle  &= \langle f_2 \otimes (e_1e_1e_2)^* ,  \Delta(e_1e_2e_1e_2) \rangle \\
&= \langle f_2 \otimes (e_1e_1e_2)^* ,  k_1e_2k_1k_2 \otimes  e_1e_1e_2 \rangle \\
&= \langle f_2, k_1e_2k_1k_2  \rangle \\
&= q^{-2} \langle f_2, e_2k_1k_1k_2  \rangle\\
&=q^{-2} (q^{-2}-q^2)^{-1} 
\end{align*}
and
\begin{align*}
\langle (e_1e_1e_2)^*f_2,  e_2e_1^2e_2 \rangle  &= \langle (e_1e_1e_2)^* \otimes f_2,  \Delta(e_2e_1^2e_2 ) \rangle \\
&= \langle (e_1e_1e_2)^* \otimes f_2,  k_2e_1^2e_2 \otimes  e_2 \rangle \\
&=  \langle (e_1e_1e_2)^* , (e_1e_1e_2) k_2\rangle (q^{-2}-q^2)^{-1} \\
&=  (q^{-2}-q^2)^{-1} 
\end{align*}
and
\begin{align*}
\langle f_2(e_1e_1e_2)^*,  e_2e_1^2e_2  \rangle  &= \langle f_2 \otimes (e_1e_1e_2)^* ,  \Delta(e_2e_1^2e_2 ) \rangle \\
&= \langle f_2 \otimes (e_1e_1e_2)^* ,  e_2k_1k_1k_2 \otimes  e_1e_1e_2 \rangle \\
&= \langle f_2, e_2k_1k_1k_2  \rangle \\
&=(q^{-2}-q^2)^{-1} .
\end{align*}

Finding the fourth element is easy by applying $\tau$ to the first four elements above. One gets
\begin{align*}
\langle - f_2(e_2e_1e_1)^* + (e_2e_1e_1)^*f_2, e_2e_1e_2e_1 \rangle &= 1 \\
\langle -  f_2(e_2e_1e_1)^* + (e_2e_1e_1)^*f_2, e_2e_1^2e_2 \rangle &= 0 \\
\langle -  f_2(e_2e_1e_1)^* +(e_2e_1e_1)^*f_2, e_1e_2^2e_1 \rangle &= 0 \\
\langle -  f_2(e_2e_1e_1)^* + f(e_2e_1e_1)^*f_2, e_1e_2e_1e_2 \rangle &= 0 \\
\end{align*}

Now, finding the third element amounts to showing that
\begin{align*}
%\langle  -q^2(e_2e_2e_1)^*f_1 + f_1(e_2e_2e_1)^*, e_2e_1e_2e_1 \rangle &= q+q^{-1} \\
%\langle  -q^2(e_2e_2e_1)^*f_1 + f_1(e_2e_2e_1)^*, e_1e_2^2e_1 \rangle &= 0 \\
%\langle  -q^2(e_2e_2e_1)^*f_1 + f_1(e_2e_2e_1)^*, e_2e_1^2e_2 \rangle &= 0 \\
%\langle  -q^2(e_2e_2e_1)^*f_1 + f_1(e_2e_2e_1)^*, e_1e_2e_1e_2 \rangle &= 0 \\
\langle (e_2e_2e_1)^*f_1 - q^2 f_1 (e_2e_2e_1)^* , e_2e_1e_2e_1 \rangle &= 0 \\
\langle (e_2e_2e_1)^*f_1 - q^2 f_1 (e_2e_2e_1)^* , e_1e_2^2e_1 \rangle &= q+q^{-1} \\
\langle (e_2e_2e_1)^*f_1 - q^2 f_1 (e_2e_2e_1)^* , e_2e_1^2e_2 \rangle &= 0 \\
\langle (e_2e_2e_1)^*f_1 - q^2 f_1 (e_2e_2e_1)^* , e_1e_2e_1e_2 \rangle &= 0 
\end{align*}
The third and fourth  lines follow from similar reasoning as above. This leaves two pairings to compute, which are
%\begin{align*}
%\langle (e_2e_2e_1)^*f_1,  e_2e_1e_2e_1 \rangle  &= \langle (e_2e_2e_1)^* \otimes f_1,  \Delta(e_2e_1e_2e_1) \rangle \\
%&= \langle (e_2e_2e_1)^* \otimes f_1,  e_2k_1e_2e_1 \otimes  e_1 \rangle \\
%&= \langle (e_2e_2e_1)^* , (e_2e_2e_1) k_1\rangle (q^{-1}-q)^{-1} \\
%&= (q^{-1}-q)^{-1} 
%\end{align*}
%and
%\begin{align*}
%\langle f_1(e_2e_2e_1)^*,  e_2e_1e_2e_1  \rangle  &= \langle f_1 \otimes  (e_2e_2e_1)^*,  \Delta(e_2e_1e_2e_1 ) \rangle \\
%&= \langle f_1(e_2e_2e_1)^* ,  k_2e_1k_2k_1 \otimes  e_2e_2e_1 \rangle \\
%&= \langle f_1, k_2e_1k_2k_1  \rangle \\
%&= q^{-2} \langle f_1, e_1k_2k_2k_1  \rangle\\
%&=q^{-2} (q^{-1}-q^1)^{-1} 
%\end{align*}
%and
\begin{align*}
\langle (e_2e_2e_1)^*f_1,  e_1e_2^2e_1 \rangle  &= \langle (e_2e_2e_1)^* \otimes f_1,  \Delta(e_1e_2^2e_1 ) \rangle \\
&= \langle (e_2e_2e_1)^* \otimes f_1,  k_1e_2^2e_1 \otimes  e_1 \rangle \\
&=  q^{-2}\langle (e_2e_2e_1)^* , (e_2e_2e_1) k_1\rangle (q^{-1}-q)^{-1} \\
&=  q^{-2}(q^{-1}-q)^{-1} 
\end{align*}
and
\begin{align*}
\langle f_1(e_2e_2e_1)^*,  e_1e_2^2e_1  \rangle  &= \langle f_1 \otimes (e_2e_2e_1)^* ,  \Delta(e_1e_2^2e_1 ) \rangle \\
&= \langle f_1 \otimes (e_2e_2e_1)^* ,  e_1k_2k_2k_1 \otimes  e_2e_2e_1 \rangle \\
&= \langle f_1, e_1k_2k_2k_1  \rangle \\
&=(q^{-1}-q)^{-1} .
\end{align*}

This finishes the proof of the lemma.
\end{proof}

The next step is to write the representation matrices. 

\begin{lemma}

\begin{multline*}
f_2 = \left(
\begin{array}{ccccc}
 0 & 0 & 0 & 0 & 0 \\
 1 & 0 & 0 & 0 & 0 \\
 0 & 0 & 0 & 0 & 0 \\
 0 & 0 & 0 & 0 & 0 \\
 0 & 0 & 0 & 1 & 0 \\
\end{array}
\right), 
\quad 
f_1 = \left(
\begin{array}{ccccc}
 0 & 0 & 0 & 0 & 0 \\
 0 & 0 & 0 & 0 & 0 \\
 0 & 1 & 0 & 0 & 0 \\
 0 & 0 & q+q^{-1} & 0 & 0 \\
 0 & 0 & 0 & 0 & 0 \\
\end{array}
\right),
\quad
e_2 = \left(
\begin{array}{ccccc}
 0 & 1 & 0 & 0 & 0 \\
 0 & 0 & 0 & 0 & 0 \\
 0 & 0 & 0 & 0 & 0 \\
 0 & 0 & 0 & 0 & 1 \\
 0 & 0 & 0 & 0 & 0 \\
\end{array}
\right), \\
e_1 = \left(
\begin{array}{ccccc}
 0 & 0 & 0 & 0 & 0 \\
 0 & 0 & q+q^{-1} & 0 & 0 \\
 0 & 0 & 0 & 1 & 0 \\
 0 & 0 & 0 & 0 & 0 \\
 0 & 0 & 0 & 0 & 0 \\
\end{array}
\right), 
\quad
k_2 = \left(
\begin{array}{ccccc}
 q^2 & 0 & 0 & 0 & 0 \\
 0 & q^{-2} & 0 & 0 & 0 \\
 0 & 0 & 1 & 0 & 0 \\
 0 & 0 & 0 & q^2 & 0 \\
 0 & 0 & 0 & 0 & q^{-2} \\
\end{array}
\right), 
\quad 
k_1 = 
\left(
\begin{array}{ccccc}
 1 & 0 & 0 & 0 & 0 \\
 0 & q^2 & 0 & 0 & 0 \\
 0 & 0 & 1 & 0 & 0 \\
 0 & 0 & 0 & q^{-2} & 0 \\
 0 & 0 & 0 & 0 & 1 \\
\end{array}
\right)
\end{multline*}
\end{lemma}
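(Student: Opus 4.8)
The plan is to realize the five-dimensional module $W$ explicitly inside $V \otimes V$ and read the matrices off a weight basis. I would begin by pinning down the action of the generators on the four-dimensional fundamental representation $V$ in the ordered basis $v_1,v_2,v_4,v_3$. The diagonal matrices $k_1,k_2$ are forced by the weights $(1,0),(0,1),(0,-1),(-1,0)$ through $k_i v_\mu = q^{(\alpha_i,\mu)} v_\mu$, and the single nonzero entries of $e_1,e_2,f_1,f_2$ connecting adjacent weight spaces are determined, up to a normalization of the basis vectors, by the relations $[e_i,f_i] = (k_i - k_i^{-1})/(q_i-q_i^{-1})$ together with the Serre relations. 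This produces an explicit four-dimensional representation that I take as the input.

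Next I would compute the action of $\Delta(e_i),\Delta(f_i),\Delta(k_i)$ on $V \otimes V$ from the coproduct formulas. For generic $q$ the module $V \otimes V$ is completely reducible ($16 = 5+1+10$), and $W$ is its five-dimensional constituent of highest weight $(1,1)$. I would locate the highest weight vector as the unique (up to scale) weight-$(1,1)$ vector annihilated by both $\Delta(e_1)$ and $\Delta(e_2)$; a short computation gives the quantum wedge $v_1 \otimes v_2 - q\, v_2 \otimes v_1$, which I identify with $w_1$. Applying $\Delta(f_2)$ and then $\Delta(f_1)$ repeatedly generates weight vectors of weights $(1,-1),(0,0),(-1,1),(-1,-1)$, which I identify with $w_2,w_3,w_4,w_5$; here one checks that the $w_i$ are precisely the $q$-deformations of the wedges written in the statement, and that the symplectic invariant $v$ (the weight-zero vector killed by every $\Delta(e_i)$ and $\Delta(f_i)$) spans the complementary trivial summand, so that $W \cong (V\wedge V)/\mathbb{C}v$ as asserted. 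The matrices of $k_1,k_2$ are then immediate from the weights of $w_1,\dots,w_5$.

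The heart of the computation is the weight-zero space. The weight-$(0,0)$ subspace of $V \otimes V$ is four-dimensional, and I must isolate the one-dimensional piece $\mathbb{C}w_3$ belonging to $W$, separate it from the invariant line $\mathbb{C}v$ and from the weight-zero part of the ten-dimensional constituent, and then compute $\Delta(f_1) w_3$ and $\Delta(e_1) w_4$. This is where the coefficient $q+q^{-1}$ appears: applying $\Delta(f_1)$ to the two-term vector $w_3$ and collecting the result onto $w_4 = v_2 \otimes v_3 - q\, v_3 \otimes v_2$ yields exactly $(q+q^{-1})\,w_4$ (and dually $\Delta(e_1)w_3 = (q+q^{-1})\,w_2$), matching the off-diagonal entries of the claimed $e_1,f_1$. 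Since $e_2,f_2$ shift the weight by $\pm(0,2)$ they can never touch the weight-zero vector $w_3$, so their matrices carry only the two unit entries dictated by the weight diagram.

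I expect the main obstacle to be precisely this weight-zero bookkeeping: keeping the $k_i$-twists in $\Delta$ straight, fixing a normalization of $V$ (and hence of the quantum wedge) for which the invariant $v$ is genuinely $\mathcal{U}_q(\mathfrak{sp}_4)$-invariant, and confirming that the chosen representatives reproduce the stated coefficients rather than some rescaling of them. As an independent cross-check I would verify that the six claimed $5\times 5$ matrices satisfy all the defining relations — the commutators $[e_i,f_j]=\delta_{ij}(k_i-k_i^{-1})/(q_i-q_i^{-1})$, the $k$-conjugation relations $k_i e_j = q^{(\alpha_i,\alpha_j)}e_j k_i$ and $k_i f_j = q^{-(\alpha_i,\alpha_j)}f_j k_i$, and the four Serre relations — by direct matrix multiplication; since the irreducible module of highest weight $(1,1)$ is unique up to isomorphism, this confirms that the matrices do define the representation $W$ with the weights assigned to $w_1,\dots,w_5$.
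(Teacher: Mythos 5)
Your proposal is correct and matches the paper's (one-line) proof, which likewise just invokes the quantum extension of the five-dimensional representation and the option of verifying the defining relations directly; you have simply supplied the details the paper omits, and your key computations (e.g.\ $\Delta(f_1)w_3=(q+q^{-1})w_4$ with $w_3 = v_1\otimes v_3 - v_3\otimes v_1 + q^{-1}v_2\otimes v_4 - q\,v_4\otimes v_2$) check out against the stated matrices. Both your construction of $W$ inside $V\otimes V$ via the $q$-deformed wedges and your direct relation-check are exactly the two routes the paper's proof gestures at.
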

\begin{proof}
By using the five--dimensional representation and extending to the quantum group, the result follows. One can also check the relations directly.
\end{proof}

By applying the previous two lemmas and applying Lemma 1, this computes every term in the central element except for the first one. Because the terms $v_3,v_4$ were not explicitly found, the term is 
\begin{multline*}
q^{-2}\left(q-q^{-1}\right)^2 \left(  (1-q^2)f_1f_2f_1f_2 + (q^4-q^{-2})f_2f_1^2f_2 + (1-q^2)f_2f_1f_2f_1 + (1-q^2)f_1f_2^2f_1  \right)  k_{(0,0)}     \\ 
\times \left(    (1-q^2)e_1e_2e_1e_2 + (q^4-q^{-2})e_2e_1^2e_2 + Ae_2e_1e_2e_1 + B e_1e_2^2e_1     \right) 
\end{multline*}
for some $A,B$. Since the bases were arbitrary, by using \eqref{anti} with the automorphism $\omega$, it can be seen that $A=B=(1-q^2)$. Another way to find $A$ and $B$ is to use the following explicit representation. 

One can check that the action on $\mathbb{C}^4 \otimes  \mathbb{C}^4$ is given by the $16\times 16$ matrices 
\begin{align*}
e_1 &= E_{12} + E_{34} + E_{67} + E_{10,12} - E_{59} - E_{8,11} - E_{13,14} - E_{15,16} \\
&+ q E_{13} + q^{-1} E_{24} + q E_{58} - q E_{6,10} - q^{-1} E_{7,12} + q^{-1} E_{9,11} - q E_{13,15} - q^{-1}E_{14,16} \\
e_2 &= E_{25} + E_{48} + E_{7,13} + E_{12,15} + E_{3,6} + q^2 E_{47} + q^{-2} E_{8,13} + E_{11,14} \\
f_1 &= E_{31} + E_{42} + E_{85} - E_{10,6} - E_{12,7} + E_{11,9} - E_{15,13} - E_{16,14} \\
&+ q^{-1} E_{21} + q E_{43} + q^{-1} E_{76} + q E_{12,10} - q^{-1} E_{95} - q E_{11,8} - q^{-1} E_{14,13} - q E_{16,15}\\
f_2 &= E_{63} + E_{74} + E_{13,8} + E_{14,11} + E_{52} + q^{-2} E_{84} + q^2 E_{13,7} + E_{15,12}\\
k_{(a,b)} &= q^{2a} E_{11} + q^{a+b}(E_{22} + E_{33}) + q^{2b} E_{44} + q^{a-b}(E_{55}+E_{66}) + E_{77} + E_{88} + E_{99} + E_{10,10} \\
&+ q^{b-a} (E_{11,11} + E_{12,12}) + q^{-2b} E_{13,13} + q^{-b-a}(E_{14,14} + E_{15,15}) + q^{-2a} E_{16,16} 
\end{align*}
and that the action of the element only commutes with the other generators when $A=B=1-q^2$. This action only provides a way to double--check that the answer is correct. 

Note that one could also check that the action on the four and five--dimensional fundamental representations is a multiple of the identity, but this would not determine the values of $A$ and $B$.

Finally, the corollary follows by explicitly calculating the action of $C$ on $\mathbb{C}^4 \otimes \mathbb{C}^4$, using the above representation.

%\HeadOne{

%\tagstructend %ends tag of document
\end{document}